\documentclass[11pt, a4paper, english]{amsart}
\usepackage{amsmath} % utilities for mathematics
\usepackage{amsthm} % utilities for theorem environment
\usepackage{amssymb} % loads mathematical fonts
\usepackage[dvipsnames]{xcolor}
\usepackage{amscd} % utilities for commutative diagrams
\usepackage{mathtools}
\usepackage{booktabs}
\usepackage[boxed]{algorithm2e}
\usepackage{subcaption}% <-- added

\usepackage{array} % core package
\usepackage[cal=boondoxo,scr=euler]{mathalfa}
\usepackage[backref=page,linktocpage]{hyperref} % permits navigating on the pdf
\usepackage{cleveref} % smart referencing
\usepackage{caption} %
\usepackage{graphics,graphicx} % permits putting images
\usepackage{tikz,tikz-cd} % tool for diagrams

\usetikzlibrary{graphs,graphs.standard,calc}

\usetikzlibrary{decorations.pathreplacing,}%angles,quotes

\usepackage{enumerate} % permits enumerating using letters or other symbols

\DeclareMathAlphabet{\mathsf}{OT1}{\sfdefault}{m}{n}

\newcommand{\nocontentsline}[3]{}
\newcommand{\tocless}[2]{\bgroup\let\addcontentsline=\nocontentsline#1{#2}\egroup}

\usepackage[margin=1.30in]{geometry} 
\usepackage{verbatim}

\usepackage{scalerel}

\makeatletter
\def\dual#1{\expandafter\dual@aux#1\@nil}
\def\dual@aux#1/#2\@nil{\begin{tabular}{@{}c@{}}#1\\#2\end{tabular}}
\makeatother

\makeatletter
\@namedef{subjclassname@2020}{\textup{2020} Mathematics Subject Classification}
\makeatother

\DeclareMathAlphabet{\amathbb}{U}{bbold}{m}{n}

\hypersetup{
    colorlinks = true,
    linkbordercolor = {white},
    linkcolor = {NavyBlue},
    anchorcolor = {black},
    citecolor = {NavyBlue},
    filecolor = {cyan},
    menucolor = {NavyBlue},
    runcolor = {cyan},
    urlcolor = {NavyBlue}
}

\usetikzlibrary{automata}

\newtheoremstyle{teoremas}% <name>
{12pt}% <Space above>
{13pt}% <Space below>
{\itshape}% <Body font>
{}% <Indent amount>
{\bfseries}% <Theorem head font>
{}% <Punctuation after theorem head>
{.5em}% <Space after theorem headi>
{}% <Theorem head spec (can be left empty, meaning `normal')>

\theoremstyle{teoremas}
\newtheorem{theorem}{Theorem}[section]
\newtheorem{corollary}[theorem]{Corollary}

\newtheorem{proposition}[theorem]{Proposition}

\newtheoremstyle{definition}% <name>
{12pt}% <Space above>
{12pt}% <Space below>
{}% <Body font>
{}% <Indent amount>
{\bfseries}% <Theorem head font>
{}% <Punctuation after theorem head>
{.5em}% <Space after theorem headi>
{}% <Theorem head spec (can be left empty, meaning `normal')>

\theoremstyle{definition}

\newtheorem{remark}[theorem]{Remark}

\crefname{theorem}{theorem}{theorems}
\Crefname{theorem}{Theorem}{Theorems}
\crefname{lemma}{lemma}{lemmas}
\Crefname{lemma}{Lemma}{Lemmas}
\crefname{proposition}{proposition}{propositions}
\Crefname{proposition}{Proposition}{Propositions}

\AtBeginDocument{%
   \def\MR#1{}
}

\title[Symmetric edge polytopes are not $\gamma$-positive]{Symmetric edge polytopes are not gamma-positive}

\author[L.~Ferroni]{Luis Ferroni}
\address{(L. Ferroni)
  Dipartimento di Matematica, Universit\`a di Pisa, Pisa, Italy.
}
\email{luis.ferroni@unipi.it}

\allowdisplaybreaks

\begin{document}

\begin{abstract}
    A conjecture posed by Ohsugi and Tsuchiya (2019) postulates that the Ehrhart $h^*$-polynomials of symmetric edge polytopes are $\gamma$-positive. We disprove this conjecture by exhibiting an infinite family of counterexamples. The smallest example provided by our construction is a $36$-dimensional symmetric edge polytope.
\end{abstract}

\maketitle

{\hfill\footnotesize\emph{Para Margarita y Bruno.}}

\section{Introduction}\label{sec:one}

\subsection{Overview}

Let $G$ be a graph with vertex set $V(G) = \{1,\ldots,n\}$ and edge set $E(G)$. The \emph{symmetric edge polytope} of $G$, denoted by $\mathcal{Q}_G$, is defined by
    \[\mathcal{Q}_G = \text{convex hull}\{ \pm (e_i-e_j) : \text{$ij\in E(G)$}\} \subseteq \mathbb{R}^n.\]
These polytopes were first considered in \cite{matsui-hibi-higashitani-nagazawa-hibi} and have attracted considerable attention in the community over the past decade, especially from the perspective of Ehrhart theory (see, e.g., \cite{higashitani-jochemko-michalek,higashitani-kummer-michalek,ohsugi-tsuchiya-conj,ohsugi-tsuchiya-2020-2,dali-juhnke-venturello,kalman-tothmeresz0,kalman-tothmeresz1,codenotti-riccardi-venturello}). 

Symmetric edge polytopes have important polyhedral properties. Notably, they are reflexive, and hence their Ehrhart $h^*$-vectors are palindromic. Furthermore, these polytopes possess regular unimodular triangulations (see \cite{higashitani-jochemko-michalek}); thus, by a result of Bruns and R\"omer \cite{bruns-romer}, the Ehrhart $h^*$-polynomial of a symmetric edge polytope is unimodal and coincides with the $h$-polynomial of a simplicial polytope.

A natural strengthening of unimodality for a palindromic polynomial is $\gamma$-positivity, a concept introduced in the work of Gal \cite{gal} and Br\"and\'en \cite{branden-gamma} in the context of the Charney--Davis and Neggers--Stanley conjectures. In this setting, based on extensive computations for graphs with a small number of vertices and for graphs belonging to special families, Ohsugi and Tsuchiya \cite[Conjecture~5.11]{ohsugi-tsuchiya-conj} proposed that $\gamma$-positivity holds for these polytopes.

Ohsugi and Tsuchiya's conjecture has sparked several positive results, among which it is worth emphasizing the proofs of positivity of the linear and quadratic coefficients of the $\gamma$-polynomial of symmetric edge polytopes. For the linear term, we refer to \cite{dali-juhnke-venturello,kalman-tothmeresz0}, whereas for the quadratic term we refer to \cite{dali-juhnke-venturello,codenotti-riccardi-venturello}. Furthermore, Ohsugi and Tsuchiya proved $\gamma$-positivity for a rather large class of graphs: namely, graphs possessing a vertex connected to every other vertex (i.e., \emph{cones} over graphs). Higashitani, Jochemko, and Micha{\l}ek \cite{higashitani-jochemko-michalek} proved it for complete bipartite graphs.

Despite the aforementioned evidence toward a positive resolution of Ohsugi and Tsuchiya's conjecture, we produce an infinite family of counterexamples.

\begin{theorem}
    There exist graphs $G$ for which the $h^*$-polynomial of $\mathcal{Q}_G$ is not $\gamma$-positive.
\end{theorem}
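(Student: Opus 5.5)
We will construct the counterexamples explicitly. The key structural input is the multiplicativity of $h^*$-polynomials under gluing along a vertex. If $G = G_1 \vee G_2$ is a one-point union (two graphs sharing a single cut vertex), then $\mathcal{Q}_G$ is, up to unimodular equivalence, the free sum $\mathcal{Q}_{G_1}\oplus\mathcal{Q}_{G_2}$. Since both are reflexive and contain the origin in their interior, we get
\[ h^*_{\mathcal{Q}_G}(x) = h^*_{\mathcal{Q}_{G_1}}(x)\, h^*_{\mathcal{Q}_{G_2}}(x). \]
Products of $\gamma$-positive polynomials are $\gamma$-positive, so wedging alone cannot break positivity. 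What it does let us do is amplify or tensor small phenomena. The plan is therefore to find a graph whose $\gamma$-vector is positive but whose structure supplies a strongly negative contribution after a further operation. The natural operation is one that multiplies $h^*$ by something not $\gamma$-positive while staying inside the class.

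Concretely, I would use the formula for the $h^*$-polynomial of $\mathcal{Q}_G$ in terms of the graph. Via the regular unimodular triangulation, one can use the Kálmán--Tóthmérész or Higashitani--Jochemko--Michałek description: $h^*$ is a sum over spanning-tree-like or subgraph data. In particular there is the known closed formula
\[ h^*_{\mathcal{Q}_G}(x) = \sum_{F} (x-1)^{\ldots}\,\cdots \]
for graphs built by replacing edges with paths, or for multi-subdivisions. I would pick a family where $h^*$ is computable in closed form. The best candidates are \textbf{complete bipartite graphs with a tail}, or \textbf{cycles glued in a theta/flower pattern}, where $h^*$ reduces to a sum of products of cycle polynomials. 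For the cycle $C_m$ the polynomial $h^*$ is explicit and $\gamma$-positive. For graphs obtained by joining many cycles along two vertices (generalized theta graphs), $h^*$ is a combination of products of cycle $h^*$'s with signed corrections. I would derive the generating function for this family, extract $\gamma$ via the substitution $x\mapsto$ the standard $\gamma$-transform, and look for a sign change in a high coefficient as the number of strands grows. This fits the abstract's hint that the smallest counterexample is $36$-dimensional (so about $37$ vertices): it suggests a large, highly regular graph in a parametric family, not a small sporadic one.

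The main obstacle is finding the right family together with a tractable formula for its $\gamma$-vector, and then proving a coefficient is negative rather than merely observing it. I would first try theta graphs $\Theta_{k,\ell}$, meaning $k$ internally disjoint paths of length $\ell$ between two vertices. For these, the triangulation/facet count gives $h^*$ as a symmetric sum indexed by how many paths are traversed in each direction. Next I would compute the top coefficient $\gamma_{\lfloor d/2\rfloor}$. It equals $(-1)^{\lfloor d/2 \rfloor}h^*(-1)$ up to normalization, and it is the easiest coefficient to isolate. So the concrete plan is:
\begin{itemize}
\item evaluate $h^*_{\mathcal{Q}_G}(-1)$ in closed form for the family (a signed count, possibly via a transfer-matrix or character-sum argument);
\item show that it has the "wrong" sign $(-1)^{d/2+1}$ for suitable parameters;
\item check that the smallest instance has dimension $36$;
\item extend to an infinite family, either through the parameter or by wedging with any graph whose $h^*$ has the right sign at $-1$, using multiplicativity.
\end{itemize}
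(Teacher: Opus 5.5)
You picked the right family (the paper's counterexamples are exactly the theta graphs $G_{k,m}=G(k,\ldots,k)$), but what you have is a search plan, and the one concrete step it commits to cannot succeed. The first gap is that you never produce $h^*$ for these graphs. Phrases like ``a symmetric sum indexed by how many paths are traversed in each direction'' and ``products of cycle $h^*$'s with signed corrections'' are placeholders, and this computation is the entire content of the paper's proof. There, bipartiteness forces every edge to be tight on every facet, so facets are choices of sign words on the $m$ paths with a common number $i$ of plus signs. The planar-dual arborescences then give a dissecting tree set in which one path is kept whole and each other path loses one edge of prescribed sign ($+$ on one cyclic block, $-$ on the other). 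Counting edges pointing away from $s$ in the K\'alm\'an--T\'othm\'er\'esz formula yields the per-path polynomials $P_{k,i}$ and \eqref{eq:formula}, from which $\gamma_{16}(G_{8,5})=-9799680$ is read off.

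The second gap is that detecting failure through the sign of $h^*(-1)$, i.e.\ through $\gamma_{\lfloor d/2\rfloor}$, is impossible here, because $h^*_{G_{k,m}}(-1)=0$ for every $k,m$.
\begin{itemize}
\item If $k$ is odd, $d=m(k-1)+1$ is odd and the vanishing is automatic. Moreover, for odd $d$ the top coefficient $\gamma_{(d-1)/2}$ is governed by $(h^*)'(-1)$, not by $h^*(-1)$, contrary to your ``up to normalization''.
\item If $k$ is even, reversing the sign word and replacing $q$ by $k+1-q$ sends $\alpha(w,q)$ to $k-1-\alpha(w,q)$. So each $P_{k,i}$ is palindromic of odd degree $k-1$ and hence divisible by $1+t$. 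Thus $(1+t)^{m-1}$ divides $h^*$, and $\gamma_j=0$ for all $j>m(k-2)/2+1$. This is exactly why the corollary has $\gamma_{17}=\gamma_{18}=0$ and the negative entry sits at $\gamma_{16}$.
\end{itemize}
Your idea can be repaired. For $k$ even, $\gamma_{m(k-2)/2+1}$ equals $(-1)^{m(k-2)/2+1}$ times the value of $h^*(t)/(1+t)^{m-1}$ at $t=-1$, which isolates precisely the coefficient the paper finds negative. But this works only once you have an explicit formula such as Proposition~\ref{prop:formula}.

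Your other candidate, complete bipartite graphs with a tail, can never work. By the very multiplicativity you invoke, a pendant path multiplies $h^*$ by a power of $1+t$ and leaves $\gamma$ unchanged, and $K_{a,b}$ is $\gamma$-positive by Higashitani--Jochemko--Micha{\l}ek. Finally, the dimension $36$ is imported from the abstract rather than derived; it is not needed for the existence statement anyway.
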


The examples we propose are series-parallel graphs, constructed as follows. Fix $m\geq 1$, and consider any vector $\mathbf{a}=(a_1,\ldots,a_m)\in \mathbb{Z}_{\geq 1}$. Let $G(\mathbf{a})$ be the graph obtained by joining two distinguished vertices $s$ and $t$ with $m$ disjoint paths, each of which has length $a_i$ for $i=1,\ldots,m$. For example, if $m = 5$ and $a_1=a_2=a_3=a_4=a_5=8$, the graph is depicted in Figure~\ref{fig:graph}. This graph is precisely the smallest example for which we have been able to detect the failure of $\gamma$-positivity. By increasing the length of the vector $\mathbf{a}$, or by increasing its entries, it is not hard to extend this construction to higher dimensions.

\begin{figure}[ht]
    \centering
    \begin{tikzpicture}
    [scale=0.72,auto=center,every node/.style={circle,scale=0.8, fill=black, inner sep=2.3pt}]
    \tikzstyle{edges} = [thick];

    \node[] (s) at (0,0) {};
    \node[] (t) at (9.6,0) {};

    \foreach \r/\y in {1/3.2,2/1.6,3/0,4/-1.6,5/-3.2} {
        \foreach \i in {1,...,7} {
            \node[] (v\r-\i) at ({1.2*\i},\y) {};
        }
        \draw[edges] (s) -- (v\r-1);
        \foreach \i/\j in {1/2,2/3,3/4,4/5,5/6,6/7} {
            \draw[edges] (v\r-\i) -- (v\r-\j);
        }
        \draw[edges] (v\r-7) -- (t);
    }

    \node[fill=white,scale=1.15,inner sep=1pt] at (-0.45,0) {$s$};
    \node[fill=white,scale=1.15,inner sep=1pt] at (10.05,0) {$t$};
    \end{tikzpicture}
    \caption{The graph $G(8,8,8,8,8)$.}
    \label{fig:graph}
\end{figure}

The natural question is how to compute the $h^*$-polynomials of these graphs. Although reasonably compact formulas should be possible for arbitrary vectors $\mathbf{a} \in \mathbb{Z}_{\geq 1}$, for the purposes of the counterexample it suffices to focus on the case in which all entries of $\mathbf{a}$ are equal. The main computational tool we employ is a formula of K\'alman and T\'othm\'er\'esz \cite{kalman-tothmeresz0}, which writes the $h^*$-vector of an arbitrary symmetric edge polytope in terms of any dissecting tree set of the graph. Because of their structure, our graphs are relatively manageable, and one can derive closed formulas that ultimately verify the main result of the paper.

\subsection*{How this counterexample was found} Over the past two years, especially since the disproof of a more general conjecture by Davis, Higashitani, and Ohsugi \cite{davis-higashitani-ohsugi} on $\gamma$-positivity for symmetric edge polytopes of regular matroids, I have encouraged many colleagues to carry out computations for series-parallel graphs of similar flavor to the ones appearing in the present manuscript. It was not until a few days ago, when Matt Larson shared with me a preliminary version of his recent double breakthrough \cite{larson} (to appear on the arXiv today), that I found enough courage to carry out the computation appearing as Proposition~\ref{prop:formula} below. This discovery was made essentially by hand, with AI assistance only in the verification of Proposition~\ref{prop:formula}. The formula was verified by GPT-5.5 Pro in two distinct ways: using the definitions in \cite{kalman-tothmeresz0} directly, and using the Gr\"obner basis of \cite{higashitani-jochemko-michalek} (see \href{https://chatgpt.com/share/6a4695ed-cd94-83eb-af72-5c3f7ca0f4a8}{here} for the verification of the main counterexample).

\section{The proof}

Recall that for $m\geq 1$ and $\mathbf{a}=(a_1,\ldots,a_m)\in \mathbb{Z}_{\geq 1}$, the graph $G(\mathbf{a})$ is constructed by joining two distinguised vertices $s$ and $t$ with $m$ disjoint paths, each of which has length $a_i$ for $i=1,\ldots,m$. 

\begin{proposition}\label{prop:formula}
Fix integers $k\geq 2$ and $m\geq 1$, and let
\[
  G_{k,m}=G(\underbrace{k,\ldots,k}_{m\text{ times}}).
\]
Let $d=\dim \mathcal{Q}_{G_{k,m}}=m(k-1)+1$.
For $0\leq i\leq k+1$, define
\[
  P_{k,i}(t)=
  \sum_{r=0}^{i-1}\sum_{j=0}^{k-i}
  \binom{r+j}{r}\binom{k-1-r-j}{i-1-r}
  t^{k-i+r-j}.
\]
Then the $h^*$-polynomial
of the symmetric edge polytope $\mathcal{Q}_{G_{k,m}}$ is
\begin{equation}\label{eq:formula}
  h^*_{G_{k,m}}(t)=
  \sum_{i=0}^{k} \binom{k}{i}t^i
  \sum_{\ell=0}^{m-1}
  \bigl(P_{k,i}(t)\bigr)^\ell
  \bigl(P_{k,i+1}(t)\bigr)^{m-1-\ell}.
\end{equation}
\end{proposition}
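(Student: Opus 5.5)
Our approach is the Kálman--Tóthmérész formula \cite{kalman-tothmeresz0}. For a connected graph $G$ with a fixed reference orientation and a dissecting tree set $\mathcal{T}$, it gives
\[
  h^*_G(t)=\sum_{T\in\mathcal{T}} t^{\,\#\{\text{edges of $G$ not in $T$ that are ``backward'' with respect to $T$}\}}.
\]
The exponent is a semi-balanced or ``cut'' statistic: each spanning tree is decorated by a cut or orientation, and we count the nontree edges whose fundamental cycle is traversed in the unfavourable direction. For $G_{k,m}$ we will choose the dissecting set so that it is compatible with the $m$-fold symmetry of the paths.

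\textbf{Spanning trees.} A spanning tree of $G_{k,m}$ is obtained in one of two ways.
\begin{enumerate}[(a)]
  \item Delete exactly one edge from each of $m-1$ paths and keep one path entire.
  \item Delete one edge from each of the $m$ paths. This disconnects $s$ from $t$, so to get a tree we instead delete one edge from every path but let exactly one path lose two edges.
\end{enumerate}
Using the standard fact that $\mathcal{Q}_G$ is a free sum over blocks, we reduce to the local structure along each path.

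\textbf{Per-path contributions.} The terms $\binom{k}{i}t^i$ together with the products of $P_{k,i}$ and $P_{k,i+1}$ suggest the following organisation. We index by the set of $i$ edges on a distinguished ``spine'' path that are oriented forward, which gives the factor $\binom{k}{i}t^i$. Each of the remaining $m-1$ paths then contributes a factor $P_{k,i}$ or $P_{k,i+1}$, according to whether its deleted edge lies ``before'' or ``after'' a threshold. Summing over $\ell$ counts how many paths are of each type.

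\textbf{Plan.}
\begin{enumerate}
  \item Fix an explicit dissecting tree set for a cycle of length $2k$ (the case $m=2$). Match the generating function of a single path glued onto a fixed $s$--$t$ path carrying $i$ forward edges with $P_{k,i}(t)$.
  \item Interpret the double sum defining $P_{k,i}$ combinatorially. In $\binom{r+j}{r}\binom{k-1-r-j}{i-1-r}$, we read $r$ and $j$ as the numbers of forward and backward edges preceding the deleted edge, with the exponent $k-i+r-j$ recording the backward count. This is a lattice-path interpretation.
  \item Show that, given the spine's orientation pattern, the contributions of different paths are independent. Conditional on the spine, the dissecting-set condition factorizes over paths, except for a single ``cut'' path whose threshold shifts $i$ to $i+1$. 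The geometric-type sum $\sum_\ell P_{k,i}^{\ell}P_{k,i+1}^{m-1-\ell}$ then arises from the position of the cut in an ordering of the paths.
\end{enumerate}

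\textbf{Main obstacle.} The hardest step is to produce a dissecting tree set whose statistic genuinely factorizes in this way. If that fails, we fall back on the Gröbner basis of \cite{higashitani-jochemko-michalek}: take the squarefree initial ideal and compute the $h$-polynomial of the resulting unimodular triangulation as a sum over facets, grouped by the pattern on the paths. That route is a transfer-matrix computation along each path, with the $s$--$t$ coupling handled by conditioning on the potential difference between $s$ and $t$. Either way, we will sanity-check the result at $t=1$: the total should be the normalized volume, and the identity $P_{k,i}(1)=\binom{k}{i}$-type values should reproduce the spanning-tree-type count.
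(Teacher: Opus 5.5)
\textbf{There is a genuine gap.} The step you call the ``main obstacle'' is an explicit dissecting tree set on which the statistic factorizes. That step is the actual content of the proof, and you leave it unresolved; the Gr\"obner fallback is only gestured at. The surrounding setup also contains errors that would derail the computation.

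\emph{The statistic.} The one you attribute to K\'alm\'an--T\'othm\'er\'esz is wrong. Their formula sums $t^{a(T)}$ over oriented spanning trees $T\in\mathcal{T}$, where $a(T)$ is the number of edges \emph{of $T$} pointing away from a fixed base vertex $v$, i.e.\ edges whose tail lies in the component of $T\setminus e$ containing $v$. A count of non-tree edges is at most $|E|-|V|+1=m-1<d$, so it could never produce the term $h^*_d t^d=t^d$.

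\emph{Spanning trees and reduction.} Every spanning tree of $G_{k,m}$ contains exactly one whole $s$--$t$ path and exactly one deleted edge on each of the other $m-1$ paths. Your type (b) removes $m+1$ edges and leaves a forest, not a tree. The free-sum reduction does nothing, since $G_{k,m}$ is $2$-connected for $m\geq 2$.

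Two ideas are missing.

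First, the facet structure. For a facet functional $\varphi$ the tight edges form a connected spanning subgraph. Because $G_{k,m}$ is bipartite, $\varphi$ has opposite parities on the two colour classes, so \emph{every} edge is tight. Facets are therefore the orientations of all edges induced by a potential. Comparing $\varphi(t)-\varphi(s)=2i_q-k$ across paths forces all $m$ paths to carry the same number $i$ of $+$ edges. This is why a single $i$ governs $\binom{k}{i}$, $P_{k,i}$ and $P_{k,i+1}$ at once. Your ``spine'' reading never explains why the other paths also have exactly $i$ forward edges.

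Second, the dissecting set itself. In each facet, take the trees whose planar duals are arborescences rooted at the exterior face.
\begin{itemize}
\item The dual is a cycle of $m$ bundles of $k$ parallel edges, so the complement of such a tree uses one edge from each of $m-1$ bundles.
\item The arborescence condition forces the deleted edges on the broken paths in one arc of the cyclic order to be $+$, and those in the other arc to be $-$. The two arcs are cut out by the exterior face and the whole path.
\item Moving the whole path around the cycle gives $\ell=0,\ldots,m-1$.
\item With $v=s$, the whole path contributes $t^{i}$ for each of its $\binom{k}{i}$ sign words.
\item A broken path with deleted edge $q$ contributes $t^{\#\{r<q:\,w_r=+\}+\#\{r>q:\,w_r=-\}}$. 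This sums to $P_{k,i}$ when $w_q=+$ and to $P_{k,i+1}$ when $w_q=-$.
\end{itemize}
So the dichotomy between $P_{k,i}$ and $P_{k,i+1}$ is the \emph{sign} of the deleted edge, not its position relative to a threshold. There is no single ``cut'' path shifting $i$ to $i+1$: $\ell$ broken paths are of one type and $m-1-\ell$ of the other. Likewise, the exponent $k-i+r-j=r+(k-i-j)$ counts the $+$ edges before $q$ plus the $-$ edges after $q$, not a backward count. For your sanity check, note that $P_{k,i}(1)=i\binom{k}{i}$, not $\binom{k}{i}$.
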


\begin{proof}
We use the spanning-tree dissection formula of K\'alm\'an--T\'othm\'er\'esz
\cite[Theorem~1.1]{kalman-tothmeresz0}: if $\mathcal T$ is any dissecting tree set
for a connected graph $G$, and if a base vertex $v$ is fixed, then
\begin{equation}
  h_r^*(\mathcal{Q}_G)=\#\{T\in\mathcal T: T\text{ has exactly }r
  \text{ edges pointing away from }v\}.
\label{eq:second-formula}
\end{equation}
Here an edge of an oriented tree $T$ points away from $v$ if, after deleting that
edge from the underlying unoriented tree, the component containing $v$ contains the
tail of the directed edge.  We shall use this formula with $v=s$.

Let $\varphi$ be a facet functional for $\mathcal{Q}_{G_{k,m}}$.  By the usual facet
description of symmetric edge polytopes, $|\varphi(u)-\varphi(v)|\leq 1$ for every
edge $uv$, and the tight-edge subgraph is connected and spanning.  Since
$G_{k,m}$ is bipartite, the values of $\varphi$ on the two bipartition classes have
opposite parity: this is clear on the connected tight-edge subgraph, where adjacent
vertices differ by $\pm1$.  Hence every edge of $G_{k,m}$ joins two vertices whose
$\varphi$-values have opposite parity.  Since their difference has absolute value at
most one, every edge is tight.

Therefore a facet is the same thing as an orientation of every edge, subject only to
the condition that this orientation comes from a potential $\varphi$.  Write a sign
word
\[
  w=(w_1,\ldots,w_k)\in\{+,-\}^k
\]
for each path, where $+$ means that the corresponding edge is oriented from $s$ to
$t$, and $-$ means that it is oriented from $t$ to $s$.  If the $q$-th path has
$i_q$ plus signs, then along that path
\(
  \varphi(t)-\varphi(s)=i_q-(k-i_q)=2i_q-k.
\)
This number must be independent of the path.  Hence all the numbers $i_q$ are
equal.  Conversely, if all paths have the same number $i$ of plus signs, then the
path sums agree, so the signs integrate to a well-defined potential.  Thus facets are
indexed by choices of sign words on the $m$ paths having a common number
$i$ of plus signs.

We now analyze the tree dissection inside a facet.
Embed $G_{k,m}$ in the plane with the $m$ paths cyclically ordered.  Fix one of the
facet orientations described above.  The planar-dual arborescence construction for
root polytopes gives a dissecting tree set for this facet, and coning these facet
dissections from the origin gives a dissecting tree set for $\mathcal{Q}_{G_{k,m}}$. The planar dual has $m$ face-vertices arranged in a cycle; each of the $m$ sides of
this dual cycle is a bundle of $k$ parallel dual edges, one for each edge of the
corresponding primal path.  A dual spanning tree chooses one dual edge from
$m-1$ of these bundles and chooses no edge from the remaining bundle.  In the
primal complement this says exactly: one of the $m$ paths is kept whole, while on
each of the other $m-1$ paths exactly one edge is deleted.

Root the dual arborescence at the exterior face.  Once the whole primal path is
chosen, the broken paths split into two cyclic blocks.  For the paths in one block
the deleted edge is required to have sign $+$, while for the paths in the other block
it is required to have sign $-$.  If $\ell$ is the number of broken paths in the first
block, then $\ell$ runs through $0,1,\ldots,m-1$ as the whole path runs through
the $m$ possible choices.  The choice of clockwise versus counterclockwise
convention only interchanges the two signs, and the final sum over all $\ell$ is
unchanged.

Let us now count edges pointing away from $s$.
Fix the common number $i$ of plus signs.  If a path is kept whole, then the path
from $s$ to any edge of it travels from $s$ toward $t$.  Thus precisely the plus edges
of that path point away from $s$.  The contribution of the whole path is therefore $\binom{k}{i}t^i$,
because there are $\binom{k}{i}$ sign words with $i$ plus signs.

Now consider a broken path, and let $q$ be the deleted edge.  The part of the path
before $q$ is attached to $s$, so on this part the plus edges point away from $s$.
The part after $q$ is attached to the rest of the tree through $t$ and the unique
whole path; therefore, on this part, the minus edges point away from $s$.  For a
sign word $w$, the number of edges pointing away from $s$ on this broken path is
\[
  \alpha(w,q)=\#\{r<q:w_r=+\}+\#\{r>q:w_r=-\}.
\]
If the deleted edge is required to have sign $+$, the generating function for this
broken path is $P_{k,i}(t)$.  Indeed, let $r$ be the number of plus signs before the
deleted edge and let $j$ be the number of minus signs before it.  Then the exponent
is $r+(k-i-j)=k-i+r-j$,
and the number of sign words with these values of $r$ and $j$ is $\binom{r+j}{r}\binom{k-1-r-j}{i-1-r}$.
Summing over $0\leq r\leq i-1$ and $0\leq j\leq k-i$ gives exactly
$P_{k,i}(t)$.

Similarly, if the deleted edge is required to have sign $-$, the generating function
is $P_{k,i+1}(t)$.  In this case, with the same meanings of $r$ and $j$, the
exponent is $r+(k-i-1-j)=k-i-1+r-j$,
and the number of sign words with these values of $r$ and $j$ is
$\binom{r+j}{r}\binom{k-1-r-j}{i-r}$.
Summing over $0\leq r\leq i$ and $0\leq j\leq k-i-1$ gives exactly the polynomial
$P_{k,i+1}(t)$, by the definition of $P_{k,i+1}$.

Putting the pieces together, for fixed $i$ and fixed $\ell$, the generating function
for the number of edges pointing away from $s$ is
\[
  \binom{k}{i}t^i
  \bigl(P_{k,i}(t)\bigr)^\ell
  \bigl(P_{k,i+1}(t)\bigr)^{m-1-\ell}.
\]
Summing over $\ell=0,\ldots,m-1$ and $i=0,\ldots,k$, and then applying
\eqref{eq:second-formula}, gives \eqref{eq:formula}.
\end{proof}

\begin{corollary}
For $G_{8,5}=G(8,8,8,8,8)$, the above formula gives $d=36$ and
\[
\begin{aligned}
(h^*_0,\ldots,h^*_{36})=(&1,44,950,13420,139545,1139424,7611536,42797920,\\
&206716612,869517648,3208517832,10401251024,29530579604,\\
&72992503712,155919966064,285492984352,444071652750,\\
&581531566920,636773179140,581531566920,444071652750,\\
&285492984352,155919966064,72992503712,29530579604,\\
&10401251024,3208517832,869517648,206716612,42797920,\\
&7611536,1139424,139545,13420,950,44,1).
\end{aligned}
\]
Consequently, one obtains
\[
\begin{aligned}
(\gamma_0,\ldots,\gamma_{18})=(&1,8,48,256,1280,6144,28672,131072,461136,\\
&1373568,3438592,8290304,16487040,37120000,63755264,\\
&77045760,-9799680,0,0).
\end{aligned}
\]
In particular, $\gamma_{16}=-9799680$.
\end{corollary}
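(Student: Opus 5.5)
The corollary is a direct specialization of Proposition~\ref{prop:formula} to $k=8$, $m=5$, so it amounts to a finite computation. The plan has three stages: evaluate the formula, convert the resulting $h^*$-vector into a $\gamma$-vector, and then check the answer in independent ways.

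First, $d=m(k-1)+1=5\cdot 7+1=36$. Next I would compute the polynomials $P_{8,i}(t)$ for $0\le i\le 9$ straight from their defining double sums. The boundary cases are simple: $P_{8,0}=0$ because the sum over $r$ is empty, and $P_{8,9}=0$ because the sum over $j$ is empty. Each remaining $P_{8,i}$ is a short Laurent-type sum of exponents $k-i+r-j$. I would check that these exponents are always nonnegative, so each $P_{8,i}$ is an honest polynomial. For each $i\in\{0,\dots,8\}$ I would then form
\[\binom{8}{i}t^i\sum_{\ell=0}^{4}P_{8,i}^{\ell}P_{8,i+1}^{4-\ell},\]
and add these contributions over $i$. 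For $i=0$ only the $\ell=0$ term survives, and for $i=8$ only the $\ell=4$ term survives.

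Two sanity checks apply at this stage. The constant term should be $h^*_0=1$. The linear coefficient should equal $h^*_1=|V|-1+\ldots$; more precisely, for a symmetric edge polytope it is $h^*_1=2|E|-d$ (number of lattice points minus $d+1$), which here is $2\cdot 40-36=44$, matching the table. The output must also be palindromic of degree $36$, and its value at $t=1$ must equal the normalized volume. Because the whole computation is a polynomial identity in exact integers, I would carry it out with computer algebra. Doing it by hand is not realistic: the coefficients reach about $6\cdot 10^{11}$.

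Second, I would extract the $\gamma$-vector from the palindromic polynomial $h^*(t)=\sum_{j=0}^{18}\gamma_j t^j(1+t)^{36-2j}$. The coefficients are found triangularly: $\gamma_0=h^*_0$, and then for each $j$ in turn, $\gamma_j$ is the coefficient of $t^j$ in $h^*(t)-\sum_{j'<j}\gamma_{j'}t^{j'}(1+t)^{36-2j'}$. An equivalent route is to substitute $t+t^{-1}=z$ in $t^{-18}h^*(t)$ and expand in the basis $(z+2)^{18-j}$. As a built-in check, the low-order values $1,8,48,256,\dots$ should follow the pattern $8\cdot 2\cdot 4^{j-1}$ for small $j$, which reflects the bipartite girth structure. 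The vanishing of $\gamma_{17}$ and $\gamma_{18}$ is a further consistency check.

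The main obstacle is making sure the arithmetic is trustworthy, since the conclusion $\gamma_{16}<0$ depends on a single large cancellation. I would therefore compute the $h^*$-vector in two independent ways. One is the closed formula \eqref{eq:formula}. The other is a direct enumeration through \eqref{eq:second-formula}, or a Gröbner-basis computation of the Hilbert series of the toric ideal. I would compare the two results, and only after that run the $\gamma$-extraction, again in exact integer arithmetic.
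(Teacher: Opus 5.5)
Your plan matches the paper's: the corollary is just exact evaluation of \eqref{eq:formula} at $k=8$, $m=5$ followed by triangular extraction of the $\gamma$-vector, and your cross-check via \eqref{eq:second-formula} or a Gr\"obner basis is the independent verification the paper reports. One of your sanity checks is misstated: the listed values satisfy $\gamma_j=(j+1)4^j$ for $0\le j\le 7$ (in particular $\gamma_1=h^*_1-d=2(|E|-|V|+1)=8$), whereas your pattern $8\cdot 2\cdot 4^{j-1}=4^{j+1}$ gives $16,64,\dots$ and would wrongly reject a correct computation.
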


\begin{remark}
    It is easy to find examples with many more negative signs among the $\gamma$-coefficients. For example, for the graph with $12$ paths of length $12$, the sign pattern of $(\gamma_0,\gamma_1,\ldots, \gamma_{66})$ is:
   \[
\begin{array}{ccccccccccc}
(+, &+, &+, &+, &+, &+, &+, &+, &+, &+, &+,\\
+, &+, &+, &+, &+, &+, &+, &+, &+, &+, &+,\\
+, &+, &+, &+, &+, &+, &+, &+, &-, &+, &+,\\
-, &+, &+, &-, &+, &-, &+, &-, &+, &-, &+,\\
-, &+, &-, &+, &-, &+, &-, &+, &-, &+, &-,\\
+, &-, &+, &-, &+, &0, & 0,& 0,& 0,& 0)
\end{array}
\]
\end{remark}

\subsection*{Acknowledgments}

I am a member of the GNSAGA group of the INdAM. I want to thank Alessio D'Al\`i, Giulia Codenotti, Michele D'Adderio, and Lorenzo Venturello for numerous conversations about symmetric edge polytopes, especially in connection with the $\gamma$-positivity conjecture. My interest in this specific problem was inspired by those conversations. I also want to thank Matt Larson for sharing with me a preliminary version of \cite{larson}, which gave me the necessary push to pursue the computation carried out in this paper. This paper was written under very special circumstances, as I am expecting my daughter Margarita to be born in very few days. I plan to update the paper in the near future by adding more background and expanding the historical discussion around this problem and the related results.

\bibliographystyle{amsalpha}
\bibliography{bibliography}

\end{document}